\theoremstyle{plain}
\newtheorem{theorem}{Theorem}[section]
\newtheorem{lemma}[theorem]{Lemma}
\newtheorem{example}[theorem]{Example}
\newtheorem{definition}[theorem]{Definition}
\newtheorem{remark}[theorem]{Remark}
\newtheorem*{problem*}{Problem}
\newtheorem{THEOREM}{Theorem}
\newtheorem{COROLLARY}[THEOREM]{Corollary}
\theoremstyle{definition}
\newtheorem*{acknowledgments}{Acknowledgments}
\numberwithin{equation}{section}
\DeclareMathOperator{\SL}{SL}
\DeclareMathOperator{\DF}{DF}
\DeclareMathOperator{\diag}{diag}
\DeclareMathOperator{\Pic}{Pic}
\title{
K-instability of hyperplane sections of Segre Varieties
}
\author{Shunsuke Saito}
\address{Department of Mathematics, Faculty of Science, Tokyo University of Science, 1-3 Kagurazaka, Shinjuku-ku, Tokyo 162-8601, Japan}
\email{saito@rs.tus.ac.jp}
\keywords{
K-stability, hyperplane section, Segre variety
}
\subjclass[2020]{Primary 14J45; Secondary 32Q20, 32Q26}
\begin{document}
\begin{abstract}
We prove that a normal hyperplane section of the Segre variety $\Sigma_{m, n}$ is K-unstable with respect to any polarization if $m\neq n$ or it is not smooth. 
\end{abstract}
\maketitle
\setcounter{tocdepth}{1}
We work over an algebraically closed field of characteristic zero unless otherwise stated. 
\section{Introduction}
The existence of a constant scalar curvature K\"ahler metric (cscK metric for short) in a given K\"ahler class has been a central problem in K\"ahler geometry. Unfortunately, not all compact K\"ahler manifolds admit cscK metrics. 
A theorem of Matsushima \cite{Matsushima} and Lichnerowicz \cite{L} states that the automorphism group of a compact K\"ahler manifold admitting a cscK metric must be reductive. 
Hano \cite{Hano} proved that a smooth hyperplane section of the Segre variety $\Sigma_{m, n}$, the image of the Segre embedding $\mathbf{P}^{m}\times\mathbf{P}^{n}\hookrightarrow \mathbf{P}^{mn+m+n}$, has non-reductive automorphism group if $m\neq n$. Thus such a hyperplane section does not admit cscK metrics in any K\"ahler class. 

The celebrated Yau-Tian-Donaldson conjecture asserts that a polarized manifold admits a cscK metric if and only if it is K-polystable. 
The ``only if'' part was proved by Stoppa \cite{Stoppa}, Mabuchi \cite{Mabuchi}, Berman-Darvas-Lu \cite{BDL}, Dyrefelt \cite{Dyrefelt}, Hisamoto \cite{Hisamoto}, and others for various stabilities. The reverse direction is considered to be wide open in general. 
In light of the conjecture, a smooth hyperplane section of $\Sigma_{m, n}$ should be K-unstable when $m\neq n$. 
However we can not conclude this assertion due to the lack of an algebro-geometric counterpart to the Lichnerowicz theorem, specifically the reductivity of the automorphism group of a K-polystable polarized manifold (except for the anticanonically polarized Fano case \cite{ABHLX}, which corresponds to the Matsushima theorem). 

In this paper, we directly prove the K-instability of certain hyperplane sections of $\Sigma_{m, n}$, which is our main theorem. 

\begin{THEOREM}\label{K}
Let $X$ be a normal hyperplane section of the Segre variety $\Sigma_{m, n}$. 
If $m\neq n$ or $X$ is not smooth, then $X$ is K-unstable with respect to any polarization. 
\end{THEOREM}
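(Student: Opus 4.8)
The plan is to prove K-instability directly, for every polarization at once, by producing an explicit product test configuration coming from a one-parameter subgroup $\rho\subset\Aut(X)$ whose Donaldson--Futaki invariant does not vanish. Since $\DF$ is an odd function of the one-parameter subgroup, nonvanishing forces one of $\rho,\rho^{-1}$ to have strictly negative $\DF$, and a product test configuration with negative $\DF$ already violates K-semistability.

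First I would put the defining form in normal form. Writing the section as the zero locus of a bilinear form $x^{\mathsf T}Ay$ on $\mathbf P^m\times\mathbf P^n$ and acting by $\SL(m+1)\times\SL(m+1)$ together with rescaling (which does not change the zero locus), I may assume $A=\diag(1,\dots,1,0,\dots,0)$ of rank $r$, so $X=\{\sum_{i=0}^{r-1}x_iy_i=0\}$. A direct check gives $\mathrm{Sing}(X)=\mathbf P^{m-r}\times\mathbf P^{n-r}$, of codimension $2r-1$ in $X$; as $X$ is a Cartier divisor in a smooth variety it is $S_2$, so $X$ is normal exactly when $r\ge 2$ and smooth exactly when $r=m+1$ or $r=n+1$. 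Exchanging the two factors so that $m\le n$, the hypotheses (normal, and $m\neq n$ or singular) translate precisely into $2\le r\le n$, i.e. $s:=n+1-r\ge 1$. I would then record that $\Pic(X)=\mathbf Z^2$ is generated by $\mathcal O(1,0)|_X$ and $\mathcal O(0,1)|_X$, with ample cone the open positive quadrant, so every polarization is $L=\mathcal O(\alpha,\beta)|_X$ with $\alpha,\beta\in\mathbf Q_{>0}$.

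For the destabilizer I would take $\rho(t)$ acting trivially on the $x_i$ and on $y$ with weight $0$ on $y_0,\dots,y_{r-1}$ and weight $1$ on $y_r,\dots,y_n$. As it fixes $f=\sum_{i<r}x_iy_i$ it preserves $X$ and lifts to $L$, so the product test configuration is legitimate (and nontrivial, since $\rho$ acts nontrivially on $\mathbf P^n$ and $X$ dominates $\mathbf P^n$). To compute $\DF(\rho)$ I would use the $\rho$-equivariant sequence $0\to\mathcal O(k\alpha-1,k\beta-1)\xrightarrow{\ \cdot f\ }\mathcal O(k\alpha,k\beta)\to\mathcal O_X(k\alpha,k\beta)\to 0$ (with $f$ of $\rho$-weight $0$, so no weight shift) to write both $d_k:=\dim H^0(X,L^{\otimes k})$ and the total $\rho$-weight $w_k$ on $H^0(X,L^{\otimes k})$ as differences of ambient quantities. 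With $p=k\alpha$, $q=k\beta$, the weight on $H^0(\mathcal O(p,q))$ is $s\binom{p+m}{m}\binom{q+n}{n+1}$, and after taking the difference the ratio telescopes to
\[
\frac{w_k}{d_k}=\frac{s\,q}{n+1}\cdot\frac{p(n+1)+mq+mn}{p\,n+mq+mn}.
\]
Expanding in $k$ gives leading term $(b_0/a_0)\,k$ and constant term $\dfrac{b_1a_0-b_0a_1}{a_0^2}=-\dfrac{m\,n\,s\,\alpha\beta}{(n+1)(n\alpha+m\beta)^2}$, which is $\DF(\rho)$ up to a fixed positive factor. This is strictly negative for all $\alpha,\beta>0$ exactly because $m,n\ge 1$ and $s\ge 1$; hence $\DF(\rho)\neq 0$, so $\rho$ or $\rho^{-1}$ destabilizes $(X,L)$. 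Since $L$ was arbitrary, $X$ is K-unstable.

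The step I expect to cost the most is the uniform evaluation of the equivariant weight polynomial and the verification that the invariant is nonzero for every admissible $(m,n,r)$ and all $\alpha,\beta$; the telescoping above is precisely what makes this tractable, collapsing the two-variable Hilbert/weight bookkeeping into a single rational expression whose constant term is manifestly a nonzero (indeed strictly negative) monomial multiple. A secondary point that needs care is the determination of $\Pic(X)$ and of the ample cone in the singular cases, where the Lefschetz-type statement must be justified for the normal but non-smooth $X$ rather than quoted for smooth ample divisors.
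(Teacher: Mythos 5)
Your proposal is correct, and it follows the same broad strategy as the paper (a product test configuration induced by a one-parameter subgroup of the ambient automorphisms preserving $X$, with $\DF$ computed through the exact sequence $0\to S_{p-1,q-1}\xrightarrow{\cdot f} S_{p,q}\to H^{0}(X,\mathcal{O}_{X}(p,q))\to 0$), but the execution is genuinely different. The paper chooses $\lambda$ inside $\SL(m+1)\times\SL(n+1)$, so the ambient total weights vanish and $w_{k}=-\alpha\dim S_{dk-1,ek-1}$ with $\alpha$ the weight of $f$; this yields the clean closed formula of Theorem \ref{DF} for \emph{every} such $\lambda$, after which a single $\lambda$ with $\alpha=1$ finishes the job. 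You instead normalize so that $f$ has weight zero (your $\rho$ is not special linear), push the weight computation onto the ambient spaces, and collapse it with the telescoping identity for $w_{k}/d_{k}$; I checked that identity and the resulting constant term $-\dfrac{mns\alpha\beta}{(n+1)(n\alpha+m\beta)^{2}}$, and both are correct. Your route buys uniformity: the paper must treat $m+n=3$ separately (identifying $X$ with the first Hirzebruch surface and quoting Ross--Thomas slope instability), because the Grothendieck--Lefschetz theorem it uses for $\Pic(X)$ requires $\dim X\ge 3$, whereas your destabilizer and formula work verbatim in that case. The price is exactly the point you flag: for $(m,n)=(1,2)$ you cannot get $\Pic(X)\simeq\mathbf{Z}^{2}$ with ample cone the positive quadrant from a Lefschetz statement and must verify it by hand on $\mathbf{F}_{1}$ (it does hold, since $\mathcal{O}(1,0)|_{X}$ and $\mathcal{O}(0,1)|_{X}$ restrict to the fiber and hyperplane classes); for $m+n\ge 4$ the SGA~2 statement applies to the possibly singular $X$ exactly as in the paper. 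Two small blemishes, neither fatal: the constant term of $w_{k}/d_{k}$ is $-\tfrac{1}{2}\DF$, not a positive multiple of $\DF$ as you assert, but your argument is deliberately sign-agnostic --- you only use nonvanishing together with $\DF(\rho^{-1})=-\DF(\rho)$ for product configurations, which is valid --- and the normal-form reduction should invoke $\mathrm{GL}(m+1)\times\mathrm{GL}(n+1)$ rather than $\SL(m+1)\times\SL(m+1)$.
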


This is an algebro-geometric analogue of Hano's result. 
Our formulation has an advantage that hyperplane sections are allowed to be singular. 

To show Theorem \ref{K}, we need a formula for the Donaldson-Futaki invariant of a product test configuration. 
For notational simplicity, hyperplane sections are treated as hypersurfaces in the product of projective spaces. 
\begin{THEOREM}\label{DF}
Let $X$ be a normal hypersurface of bidegree $(1, 1)$ in $\mathbf{P}^{m}\times\mathbf{P}^{n}$ and $f$ be its defining polynomial. Let $\lambda$ be a one-parameter subgroup of $\SL(m+1)\times\SL(n+1)$ which preserves $X$ and $\alpha$ be the weight of $f$ for $\lambda$. For positive integers $d$ and $e$, the Donaldson-Futaki invariant of the product test configuration $(\mathcal{X}_{\lambda}, \mathcal{L}_{\lambda})$ for $(X, \mathcal{O}_{X}(d, e))$ induced by $\lambda$ is given by
\[
\DF(\mathcal{X}_{\lambda}, \mathcal{L}_{\lambda})=-\dfrac{2mnde}{(me+nd)^{2}}\alpha. 
\]
\end{THEOREM}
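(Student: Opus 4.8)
The plan is to compute both the dimension $h(k)=\dim H^{0}(X,\mathcal O_{X}(dk,ek))$ and the total weight $w(k)$ of $\lambda$ on this space as polynomials in $k$, and then substitute their two leading coefficients into the definition of the Donaldson--Futaki invariant. Since $(\mathcal X_{\lambda},\mathcal L_{\lambda})$ is a product test configuration, its central fibre is again $(X,\mathcal O_{X}(d,e))$ and $w(k)$ is exactly the weight of the induced $\mathbf{G}_m$-action on $H^{0}(X,\mathcal O_{X}(dk,ek))$, so no genuine degeneration has to be analysed.

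First I would reduce everything to the ambient product via the defining sequence of the hypersurface. Twisting $0\to\mathcal O(-1,-1)\to\mathcal O\to\mathcal O_{X}\to0$ by $\mathcal O(dk,ek)$ gives
\[
0\longrightarrow \mathcal O(dk-1,ek-1)\xrightarrow{\ \cdot f\ }\mathcal O(dk,ek)\longrightarrow \mathcal O_{X}(dk,ek)\longrightarrow 0 .
\]
By Künneth one has $H^{1}\big(\mathbf P^{m}\times\mathbf P^{n},\mathcal O(dk-1,ek-1)\big)=0$ for every $k\ge1$, so taking global sections stays exact and
\[
h(k)=\binom{dk+m}{m}\binom{ek+n}{n}-\binom{dk+m-1}{m}\binom{ek+n-1}{n}.
\]

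The key step is the weight computation. On $H^{0}\big(\mathbf P^{m}\times\mathbf P^{n},\mathcal O(a,b)\big)=\mathrm{Sym}^{a}(\mathbb C^{m+1})\otimes\mathrm{Sym}^{b}(\mathbb C^{n+1})$ the total weight of $\lambda$ is, by symmetry among the monomials, proportional to the trace of $\lambda$ on each factor; since $\lambda\in\SL(m+1)\times\SL(n+1)$ is traceless, this total weight vanishes identically. As multiplication by $f$ is $\lambda$-equivariant up to the shift by its weight $\alpha$, comparing total weights along the exact sequence gives
\[
w(k)=-\alpha\binom{dk+m-1}{m}\binom{ek+n-1}{n}.
\]
Thus both $h(k)$ and $w(k)$ are explicit, and the whole problem becomes the expansion of two products of binomial coefficients. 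Writing $h(k)=a_{0}k^{N}+a_{1}k^{N-1}+\cdots$ and $w(k)=b_{0}k^{N+1}+b_{1}k^{N}+\cdots$ with $N=m+n-1$, the leading terms of the two binomial products cancel in $h(k)$ (confirming $\dim X=N$) and one finds $a_{0}=\tfrac{d^{m-1}e^{n-1}}{m!\,n!}(me+nd)$, while $b_{0}$ and $b_{1}$ come directly from $-\alpha$ times the second product. Substituting $a_{0},a_{1},b_{0},b_{1}$ into the definition of $\DF$ and simplifying, the dependence on the elementary symmetric functions of the exponents collapses and the combination $a_{1}b_{0}-a_{0}b_{1}$ reduces to a single monomial, yielding the stated value $-\tfrac{2mnde}{(me+nd)^{2}}\alpha$.

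The main obstacle I anticipate is twofold. First, one must justify cleanly that the total weight on the ambient sections vanishes: this rests on the traceless property of $\lambda$, together with the correct equivariance of multiplication by $f$ and a careful tracking of the sign convention for the weight. Second comes the second-order bookkeeping: a naive degree count shows that $a_{1}$ and $b_{1}$ are both needed, so one must carry the subleading coefficients of each binomial product and verify that the unwieldy symmetric-function contributions cancel. This cancellation is precisely what produces the clean closed form, and it is the step most prone to computational error.
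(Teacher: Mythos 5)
Your proposal is correct and follows essentially the same route as the paper: the same Koszul-type exact sequence for the hypersurface, the same observation that the total weight on the ambient section spaces vanishes because $\lambda$ lies in $\SL(m+1)\times\SL(n+1)$ (so $w_{k}=-\alpha\dim S_{dk-1,ek-1}$), and the same expansion of the binomial products to extract $a_{0},a_{1},b_{0},b_{1}$. The only addition is your explicit remark on the vanishing of $H^{1}$ to justify exactness on global sections, which the paper leaves implicit.
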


Theorem \ref{K} will be proved by combining of Theorem \ref{DF} with the construction of a one-parameter subgroup $\lambda$ satisfying $\alpha=1$. 
When the field we work over is the complex number field, this argument also shows the following result on the Futaki invariant \cite{F1, F2} (see also \cite{B} and \cite{C}). 
\begin{COROLLARY}
Let $X$ be a smooth hyperplane section of $\Sigma_{m, n}$. If $m+n\ge 4$ and $m\neq n$, then the Futaki invariant of $X$ does not vanish for any rational K\"ahler class. 
\end{COROLLARY}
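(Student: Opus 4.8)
The plan is to reduce the Corollary to Theorem \ref{DF} by identifying the Donaldson–Futaki invariant of a product test configuration with the classical Futaki invariant of the holomorphic vector field generating the underlying $\mathbb{C}^\times$-action. Over the complex numbers, the one-parameter subgroup $\lambda$ constructed with $\alpha=1$ in the proof of Theorem \ref{K} generates a nonzero holomorphic vector field $\xi_\lambda$ on $X$, and for the polarization $\mathcal{O}_X(d,e)$ the induced product test configuration $(\mathcal{X}_\lambda, \mathcal{L}_\lambda)$ should satisfy
\[
\DF(\mathcal{X}_\lambda, \mathcal{L}_\lambda) = c_{d,e}\, F\big(\xi_\lambda, c_1(\mathcal{O}_X(d,e))\big)
\]
for a positive constant $c_{d,e}$, where $F$ denotes the Futaki invariant. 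This is the smooth-case compatibility between the algebraic and differential-geometric invariants that motivated Donaldson's definition.

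First I would record the structure of the Picard group. Since $X$ is a smooth ample divisor in $\mathbf{P}^m\times\mathbf{P}^n$ of complex dimension $m+n-1$, the Lefschetz hyperplane theorem gives, precisely when $m+n\ge 4$ (so that $\dim X \ge 3 > 2$), an isomorphism $H^2(\mathbf{P}^m\times\mathbf{P}^n;\mathbb{Z}) \xrightarrow{\sim} H^2(X;\mathbb{Z})$ by restriction. Consequently every rational Kähler class on $X$ is a positive rational multiple of $c_1(\mathcal{O}_X(d,e))$ for some positive integers $d,e$; this is exactly where the hypothesis $m+n\ge 4$ enters.

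Next, applying Theorem \ref{DF} with $\alpha=1$ yields $\DF(\mathcal{X}_\lambda, \mathcal{L}_\lambda) = -2mnde/(me+nd)^2$, which is strictly negative, hence nonzero, for all positive integers $d,e$. By the displayed compatibility this forces $F\big(\xi_\lambda, c_1(\mathcal{O}_X(d,e))\big) \neq 0$, so the Futaki invariant, viewed as a character on the space of holomorphic vector fields, does not vanish identically for the class $c_1(\mathcal{O}_X(d,e))$: the field $\xi_\lambda$ witnesses its nonvanishing. Finally, since the Futaki invariant is homogeneous under scaling of the Kähler class — for $t>0$ one has $F(\xi_\lambda, t\Omega) = t^{\dim X} F(\xi_\lambda, \Omega)$, as the Ricci potential is scale-invariant — nonvanishing propagates to every positive multiple, and hence, by the Picard group description, to every rational Kähler class.

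The main obstacle is making the displayed identification precise with the correct normalization: one must verify that the Donaldson–Futaki invariant of the product configuration $(\mathcal{X}_\lambda, \mathcal{L}_\lambda)$ genuinely computes the classical Futaki invariant of $\xi_\lambda$ for the class $c_1(\mathcal{O}_X(d,e))$, and that the comparison constant $c_{d,e}$ is nonzero. This is standard in the smooth projective setting but requires care to align the weight and Hilbert-polynomial data underlying $\DF$ with the curvature integral defining $F$; the homogeneity argument then handles the passage from integral polarizations to arbitrary rational Kähler classes.
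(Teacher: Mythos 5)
Your proposal matches the paper's (implicit) argument: the paper likewise combines Theorem \ref{DF} with $\alpha=1$, the Grothendieck--Lefschetz identification $\Pic(\mathbf{P}^m\times\mathbf{P}^n)\simeq\Pic(X)$ for $m+n\ge 4$, and the standard fact (recorded in the paper's Remark, citing \cite[Proposition 2.2.2]{Donaldson}) that $\DF(\mathcal{X}_\lambda,\mathcal{L}_\lambda)$ is a non-zero constant multiple of the Futaki invariant evaluated at the generator of $\lambda$. The comparison constant you worry about is exactly what that citation supplies, so your argument is correct and essentially identical to the paper's.
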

By \cite[Theorem 4]{C}, this implies that such a hyperplane section has a chance to admit a non-trivial extremal K\"ahler metric in a rational K\"ahler class. Note that  in the case when $m+n=3$, $X$ is isomorphic to the first Hirzebruch surface as seen in the proof of Theorem \ref{K} and admits non-trivial extremal K\"ahler metrics in every K\"ahler class due to Guan \cite{G} and Hwang \cite{Hwang}. 

\section{Donaldson-Futaki invariants and K-instability}
This section reviews the definitions of Donaldson-Futaki invariants and K-instability. 
For further details, we refer to \cite{BHJ}. 

Throughout this section, $(X, L)$ denotes an $n$-dimensonal polarized normal variety. 
\begin{definition}
A test configuration for $(X, L)$ is a quadruple of the following data: 
\begin{itemize}
\item a normal $\mathbf{G}_{m}$-variety $\mathcal{X}$; 
\item a $\mathbf{G}_{m}$-equivariant flat proper morphism $\pi\colon \mathcal{X}\to\mathbf{A}^{1}$, where $\mathbf{G}_{m}$ acts on $\mathbf{A}^{1}$ by the standard multiplication; 
\item a $\mathbf{G}_{m}$-linearized ample $\mathbf{Q}$-invertible sheaf $\mathcal{L}$ on $\mathcal{X}$; 
\item an isomorphism $(\mathcal{X}_{1}, \mathcal{L}_{1})\simeq (X, L)$. 
\end{itemize}
We say that $(\mathcal{X}, \mathcal{L})$ is product if $\mathcal{X}\simeq X\times\mathbf{A}^{1}$. 
\end{definition}

\begin{example}
Fix a $\mathbf{G}_{m}$-action $\lambda$ on $X$. This induces the diagonal $\mathbf{G}_{m}$-action on $X\times\mathbf{A}^{1}$ as the product of $\lambda$ and the standard action on $\mathbf{A}^{1}$. 
Since some power of $L$ admits a $\mathbf{G}_{m}$-linearization, we obtain a product test configuration for $(X, L)$. 
We denote by $(\mathcal{X}_{\lambda}, \mathcal{L}_{\lambda})$ the test configuration induced by $\lambda$. 
\end{example}

Consider a test configuration $(\mathcal{X}, \mathcal{L})$ for $(X, L)$. 
Let $w_{k}$ denote the total weight of the dual action of $\mathbf{G}_{m}$ on $H^{0}(\mathcal{X}_{0}, \mathcal{L}_{0}^{\otimes k})$, and let $N_{k}=\dim H^{0}(\mathcal{X}_{0}, \mathcal{L}_{0}^{\otimes k})$. 
For a sufficiently divisible positive integer $k$, we have the following asymptotic expansions: 
\begin{align*}
N_{k}&=a_{0}k^{n}+a_{1}k^{n-1}+\cdots, \\
w_{k}&=b_{0}k^{n+1}+b_{1}k^{n}+\cdots. 
\end{align*}
\begin{definition}
The Donaldson-Futaki invariant of $(\mathcal{X}, \mathcal{L})$ is defined by
\[
\DF(\mathcal{X}, \mathcal{L})=2\dfrac{
a_{1}b_{0}-a_{0}b_{1}
}{a_{0}^{2}}. 
\]
\end{definition}
Note that the Donaldson-Futaki invariant remains unchanged when the linearization on $\mathcal{L}$ is twisted. 

\begin{remark}
Assume that the field we work over is the complex number field. 
When $X$ is smooth, the Donaldson-Futaki invariant of the product test configuration $(\mathcal{X}_{\lambda}, \mathcal{L}_{\lambda})$ induced by a $\mathbf{G}_{m}$-action $\lambda$ on $X$ is a non-zero constant multiple of the Futaki invariant of $X$ for $c_{1}(L)$ evaluated at the generator of $\lambda$. We refer to \cite[Proposition 2.2.2]{Donaldson} for the proof. 
\end{remark}

\begin{definition}
A polarized normal variety $(X, L)$ is called
\begin{itemize}
\item K-semistable if $\DF(\mathcal{X}, \mathcal{L})\ge 0$ for all test configurations $(\mathcal{X}, \mathcal{L})$ for $(X, L)$. 
\item K-unstable if $(X, L)$ is not K-semistable. 
\end{itemize}
\end{definition}
\section{Proof of the main results}
\subsection{A formula for Donaldson-Futaki invariants}
\begin{proof}[Proof of Theorem \ref{DF}]
Set
\[
S_{a, b}= H^{0}(\mathbf{P}^{m}\times\mathbf{P}^{n}, \mathcal{O}(a, b)), \quad R_{c}= H^{0}(X, \mathcal{O}_{X}(d, e)^{\otimes c})
\]
for non-negative integers $a$, $b$, and $c$. In this proof, $k$ always denotes a sufficiently divisible positive integer. 

We have an exact sequence
\begin{align*}
0\longrightarrow S_{dk-1, ek-1}\overset{\cdot f}{\longrightarrow}S_{dk, ek}\longrightarrow R_{k}\longrightarrow 0, 
\end{align*}
where the second morphism is multiplication by $f$ and the third is the restriction onto $X$. 
Using this, we obtain
\begin{align*}
N_{k}
&=\dim R_{k}=\dim S_{dk, ek}-\dim S_{dk-1, ek-1}\\
&=\binom{m+dk}{m}\binom{n+ek}{n}-\binom{m+dk-1}{m}\binom{n+ek-1}{n}\\
&=\dfrac{d^{m-1}e^{n-1}}{m!n!}(me+nd)k^{m+n-1}\\
&\qquad+\dfrac{d^{m-2}e^{n-2}}{2m!n!}(
m^{2}(m-1)e^{2}+mn(m+n)de+n^{2}(n-1)d^{2})k^{m+n-2}+\cdots
\end{align*}
and
\[
a_{0}=\dfrac{d^{m-1}e^{n-1}}{m!n!}(me+nd), \quad
a_{1}=\dfrac{d^{m-2}e^{n-2}}{2m!n!}(
m^{2}(m-1)e^{2}+mn(m+n)de+n^{2}(n-1)d^{2}). 
\]
Next we compute the total weight $w_{k}$ of the $\mathbf{G}_{m}$-action on $R_{k}$ induced by $\lambda$. 
Since $\lambda$ is a one-parameter subgroup of $\SL(m+1)\times\SL(n+1)$, the total weights of the actions on $S_{dk-1, ek-1}$ and $S_{dk, ek}$ are both zero. 
Combining this with the exact sequence above, we obtain
\begin{align*}
w_{k}&=-\alpha \dim S_{dk-1, ek-1}
=-\alpha\binom{m+dk-1}{m}\binom{n+ek-1}{n}\\
&=-\dfrac{d^{m}e^{n}}{m!n!}\alpha k^{m+n}
-\dfrac{d^{m-1}e^{n-1}}{2m!n!}(m(m-1)e+n(n-1)d)\alpha k^{m+n-1}+\cdots
\end{align*}
and 
\[
b_{0}=-\dfrac{d^{m}e^{n}}{m!n!}\alpha, \quad
b_{1}=-\dfrac{d^{m-1}e^{n-1}}{2m!n!}(m(m-1)e+n(n-1)d)\alpha. 
\]
The formula for $\DF(\mathcal{X}_{\lambda}, \mathcal{L}_{\lambda})$ now follows from a straightforward computation. 
\end{proof}
\subsection{K-instability of hyperplane sections}
Let $X$ be a hyperplane section of the Segre variety $\Sigma_{m, n}$. In what follows, we treat $X$ as a hypersurface of bidegree $(1, 1)$ in $\mathbf{P}^{m}\times\mathbf{P}^{n}$. We choose homogeneous coordinates on $\mathbf{P}^{m}$ and $\mathbf{P}^{n}$ so that the defining polynomial $f$ of $X$ is written as
\[
f(x_{0}, \ldots, x_{m}, y_{0}, \ldots, y_{n})=\sum_{i=0}^{r}x_{i}y_{i}, 
\]
where $0\le r\le \min\{m, n\}$. 
The Jacobian criterion for smoothness then provides the following. 
\begin{lemma}\label{lem}
\begin{enumerate}
\item $X$ is smooth if and only if $r=\min\{m, n\}$. 
\item $X$ is normal if and only if $r\ge 1$. 
\end{enumerate}
\end{lemma}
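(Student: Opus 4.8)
The plan is to work on the smooth ambient variety $\mathbf{P}^{m}\times\mathbf{P}^{n}$, locate the singular locus of $X$ explicitly via the Jacobian criterion, and then read off both statements from it. First I would record the partial derivatives of $f=\sum_{i=0}^{r}x_{i}y_{i}$: one has $\partial f/\partial x_{i}=y_{i}$ and $\partial f/\partial y_{i}=x_{i}$ for $0\le i\le r$, while all remaining partials vanish identically. Since $f$ is bihomogeneous of bidegree $(1,1)$, the Euler relations $\sum_{i}x_{i}\,\partial f/\partial x_{i}=f=\sum_{j}y_{j}\,\partial f/\partial y_{j}$ show that the vanishing of all partials at a point already forces $f=0$ there. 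Hence a point $([x],[y])$ is a singular point of $X$ precisely when $x_{0}=\cdots=x_{r}=0$ and $y_{0}=\cdots=y_{r}=0$, so that
\[
\operatorname{Sing}(X)\cong\mathbf{P}^{m-r-1}\times\mathbf{P}^{n-r-1},
\]
with the convention that this set is empty whenever one of the exponents is negative.

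For part (1), this singular locus is empty if and only if $m-r-1<0$ or $n-r-1<0$, that is, $r\ge\min\{m,n\}$. Combined with the standing inequality $r\le\min\{m,n\}$, smoothness is equivalent to $r=\min\{m,n\}$.

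For part (2) I would invoke Serre's criterion: $X$ is normal if and only if it satisfies $R_{1}$ and $S_{2}$. Being a hypersurface in the smooth variety $\mathbf{P}^{m}\times\mathbf{P}^{n}$, $X$ is a local complete intersection, hence Cohen--Macaulay, so $S_{2}$ holds automatically; thus normality reduces to $R_{1}$, namely $\operatorname{codim}_{X}\operatorname{Sing}(X)\ge 2$. When $r=\min\{m,n\}$ the variety is smooth by part (1), hence normal. When $1\le r<\min\{m,n\}$ one computes $\dim\operatorname{Sing}(X)=m+n-2r-2$ against $\dim X=m+n-1$, giving codimension $2r+1\ge 3$, so $R_{1}$ holds and $X$ is normal. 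Finally, when $r=0$ the polynomial factors as $f=x_{0}y_{0}$, so $X$ is the union of the two divisors $\{x_{0}=0\}$ and $\{y_{0}=0\}$; here $\operatorname{Sing}(X)$ has codimension $1$, $R_{1}$ fails, and $X$ is not normal. This yields normality exactly when $r\ge 1$.

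The dimension bookkeeping is routine; the step requiring the most care is the passage from the set-theoretic singular locus to normality, where one must correctly package the Cohen--Macaulay property of the hypersurface to supply $S_{2}$ and then isolate the degenerate case $r=0$, in which the codimension of the singular locus drops to $1$ and $X$ becomes a reducible divisor.
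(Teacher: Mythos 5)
Your proof is correct and follows exactly the route the paper indicates: the paper offers no written proof beyond the remark that ``the Jacobian criterion for smoothness then provides the following,'' and your argument is a complete write-up of that --- identifying $\operatorname{Sing}(X)\cong\mathbf{P}^{m-r-1}\times\mathbf{P}^{n-r-1}$ via the partials and the Euler relation, and then settling normality by Serre's criterion using that a hypersurface in a smooth variety is Cohen--Macaulay. The codimension count $2r+1$ and the separate treatment of the reducible case $r=0$ are both accurate, so nothing is missing.
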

\begin{proof}[Proof of Theorem \ref{K}]
Assume that $m\le n$. Suppose that $m\neq n$ or $X$ is not smooth. Then $m+n\ge 3$ by Lemma \ref{lem}. 

If $m+n=3$, then $r=1$ again by Lemma \ref{lem}. $X$ is then isomorphic to the first Hirzebruch surface, which was proved to be slope unstable and hence K-unstable with respect to any polarization by Ross-Thomas \cite[Example 5.27]{RT06}. 

Assume that $m+n\ge 4$. Pick an ample invertible sheaf $L$ on $X$. 
By the Grothendieck-Lefschetz hyperplane theorem \cite[Expos\'e XII, Corollaire 3.6]{SGA}, the restriction $\Pic(\mathbf{P}^{m}\times\mathbf{P}^{n})\to \Pic(X)$ is an isomorphism in this case. We thus have $L\simeq \mathcal{O}_{X}(d, e)$ for some positive integers $d$ and $e$. 
Under our assumption, we have $r<n$ by Lemma \ref{lem} and can define a one-parameter subgroup $\lambda$ of $\SL(m+1)\times\SL(n+1)$ by
\[
\lambda(t)=(
\diag(1, \dots, 1), \diag(\underbrace{t^{-1}, \dots, t^{-1}}_{r+1}, t^{r+1}, 1, \dots, 1)). 
\]
One can easily see that the weight of $f$ with respect to $\lambda$ is 1. 
We thus obtain 
\[
\DF(\mathcal{X}_{\lambda}, \mathcal{L}_{\lambda})=-\frac{2mnde}{(me+nd)^{2}}<0
\]
 by Theorem \ref{DF} and the K-instability of $(X, L)$. 
\end{proof}

\begin{acknowledgments}
This work was partially supported by JSPS KAKENHI Grant Number JP20K14321. 
\end{acknowledgments}

\bibliographystyle{amsalpha}

\end{document}